\newtheorem{theorem}{Theorem} [section]
\newtheorem{thm}[theorem]{Theorem}
\newtheorem{example}[theorem]{Example}
\newtheorem{lemma}[theorem]{Lemma}
\begin{document}

\title{Milnor Invariants for Spatial Graphs}
\author{Thomas Fleming }

\begin{abstract}

Link homotopy has been an active area of research for knot theorists
since its introduction by Milnor in the 1950s.  We introduce a new
equivalence relation on spatial graphs called component homotopy,
which reduces to link homotopy in the classical case. Unlike
previous attempts at generalizing link homotopy to spatial graphs,
our new relation allows analogues of some standard link homotopy
results and invariants.

In particular we can define a type of Milnor group for a spatial graph under component homotopy,
and this group determines whether or not the spatial graph is splittable.  More surprisingly, we will
also show that whether the spatial graph is splittable up to component homotopy depends only on the
link homotopy class of the links contained within it. Numerical invariants of the relation will also be
produced.
\end{abstract}

\keywords{spatial graph, Milnor numbers, Milnor group, link
homotopy, edge homotopy, component homotopy}

\maketitle

\section{Introduction}

Where knot theory studies embeddings of circles into $S^{3}$,
spatial graph theory studies embeddings of arbitrary graphs.  Let
$G$ be a finite abstract graph.  Let $f$ be an embedding of $G$ into
the $3$-sphere.  We will call $f(G)$ a {\it spatial embedding of
$G$} or simply a {\it spatial graph}.  Since it is the map $f$ that
matters in this construction, we will often refer to $f$ as a
spatial embedding.  We will often use the notation $\Phi := f(G)$ to
emphasize that we are thinking of the spatial graph as a subcomplex
of $S^{3}$.

Several analogues of link homotopy have been proposed for spatial
graphs, such as the edge homotopy and vertex homotopy of Taniyama
\cite{taniyama2}. Two spatial embeddings of a graph $G$ are called
{\it edge-homotopic} if they are transformed into each other by {\it
self crossing changes} and ambient isotopies, where a self crossing
change is a crossing change between two arcs of the same edge, and
{\it vertex-homotopic} if they are transformed into each other by
crossing changes between arcs of two adjacent edges and ambient
isotopies. Each of these notions reduces to link homotopy in the
case when the graph is a disjoint union of circles, and invariants
of these relations have been produced \cite{nikknme, taniyama1}.
Classification of the embeddings of some graphs under these
relations is possible. For example, spatial embeddings of $K_{4}$ up
to edge-homotopy have been completely analyzed by Nikkuni
\cite{nikk1}.

Milnor invariants are sufficient to classify two and three component
links up to link homotopy \cite{m2} and contributed to the eventual
classification of all links \cite{hl}.  So far, no analogue of these
invariants exists for the spatial graph relations listed above.
Since Milnor invariants are a useful tool for studying link
homotopy, our goal is to introduce a generalization of link homotopy
for spatial graphs that allows a reasonable generalization of
Milnor's link homotopy invariants.

The equivalence relation we will use is called \emph{component
homotopy}. Let $SE(G)$ denote the set of all spatial embeddings of
an abstract graph $G$. Two embeddings $\Phi, \Phi' \in SE(G)$ are
called component homotopic if $\Phi$ can be transformed to $\Phi'$
by ambient isotopy and crossing changes between edges that belong to
the same component of $G$.  Note that if $G$ is connected, then any
two embeddings are component homotopic. This is the analogue of the
fact that any knot is link homotopic to the unknot, though for most
graphs there is no canonical choice of embedding to call the
``unknot.'' However, up to component homotopy, there is a clear
choice for the analogue of the unlink, which we will call a
\emph{completely split} embedding.  A spatial embedding $\Phi$ of
$G$ is completely split if each component of $\Phi$ can be separated
from all others by an embedded $S^{2}$.  Note that any two
completely split spatial embeddings $f, g \in SE(G)$ are component
homotopic, and thus up to component homotopy, there is a canonical
``unlink.''

If Milnor's link homotopy invariants vanish for $L$, then $L$ is
link homotopic to the unlink \cite{m1}.  The vanishing of Milnor's
link homotopy invariants is equivalent to the Milnor group of the
link, $ML$, being isomorphic to the free Milnor group $MF$. Choosing
the correct analogue of these groups, which we will call $CM(\Phi)$
and $CMF(G)$, we are able to prove the following theorems, which are
direct generalizations of Theorem 8 and Corollary 2 of Milnor's
original paper \cite{m1}.

\medskip
\noindent\textbf{Theorem \ref{mainthm}} \emph{Let $\Phi$ be an
embedding of a graph $G$ into $S^{3}$.  Then $CMF(G) \cong CM(\Phi)$
where the isomorphism preserves generators (up to conjugacy) if and
only if $\Phi$ is component homotopic to a completely split
embedding.} \medskip

\noindent\textbf{Theorem \ref{itriv}} \emph{Let $\Phi$ be an
embedding of a graph $G$ into $S^{3}$ with components $\Phi_{1}
\ldots \Phi_{n}$.  If the map $\theta$ from $CMF(G)$ to $CM(\Phi)$
sends generators to conjugates of generators, and $\theta^{i}$ is
the induced map on the groups for $\Phi^{i} := \Phi \setminus
\Phi_{i}$, then $ker~\theta \cong ker~ \theta^{i}$ if and only if
$\Phi$ is component homotopic to an embedding where $\Phi_{i}$ can
be separated from the rest of $\Phi$ by an embedded $S^{2}$.}

\medskip

A spatial subgraph of $\Phi$ that is homeomorphic to $\coprod S^{1}$ is
called a \emph{constituent link}.
%To links $L$ and $L'$ are called
%\emph{colored link homotopic} if each component of each link is
%labled with a color, and $L$ can be transformed to $L'$ by ambient
%isotopies and crossing changes between edges of the same color.
The following theorem shows that the link homotopy classes of
certain constituent links of $\Phi$ determine whether $\Phi$ is
component homotopic to a completely split embedding. Note that this
is in contrast to edge homotopy and vertex homotopy, where there are
infinite families of examples of non-split embeddings, all of whose
constituent links are link homotopic to trivial links
\cite{nikknme}.

\medskip
\noindent\textbf{Theorem \ref{splitthm}} \emph{Let $L$ be a constituent link of
$\Phi$, where at most one component of $L$ is contained in each component of $\Phi$.  Every such
constituent link of $\Phi$ is link homotopic to the trivial link if and only if $\Phi$ is component
homotopic to a completely split embedding.}
\medskip

It is also possible to extract numerical invariants from the $CM(\Phi)$
by looking at successive nilpotent quotients.  Milnor's numbers can be interpreted as arising from the
elements in the kernel of the map $MF \rightarrow ML$, and so by examining the kernel of the map
$CMF(G) \rightarrow CM(\Phi)$ we will be able to produce similar (though less subtle) numerical
invariants.  Naturally, any invariant of component homotopy is an invariant of edge homotopy and
vertex homotopy, so these numerical invariants are invariants of those relations as well.

We would like to thank Peter Teichner for helpful conversations, and Akira Yasuhara for an
engaging discussion that led to Example \ref{E:akiraeg}.

\section{The Milnor group for spatial graphs}

In this chapter, we will study
embeddings of graphs up to component homotopy.  That is, up to colored
edge homotopy, where every edge in a given component of a graph is labeled
with the same color, and each component of the graph is given a distinct
color.  Notice that for the case $G = \coprod S^{1}$, this is simply link
homotopy.

In an analogue to the colored Milnor group $CMG$ of \cite{ft}, for
an embedding $f: G \rightarrow S^{3}$ we can define a ``free'' group
$CMF(G)$ associated to $G$, and a group $CM(\Phi)$ associated to the
embedding.

The meridians of the edges of $\Phi$ form a normally generating set
for $CM(\Phi):= \pi_{1}(S^{3} \setminus \Phi) /
\ll[m_{ij}^{g_{1}},m_{ik}^{g_{2}}]\gg$, where the generator $m_{ij}$
is the meridian of the $j$th edge in the $i$th component, the
$g_{i}$ are elements of $\pi_{1}(S^{3} \setminus \Phi)$, and $\ll S
\gg$ denotes the subgroup normally generated by the set $S$. In
fact, we need only one generator per loop in $\Phi$, but by
contracting a spanning tree, we may think of these generators as
meridians for a subset of the edges.

Let $CMF(G)$ denote the free colored Milnor group corresponding to
$G$, defined in the following way.  Fix a spanning tree $T$ of $G$.  Label the edges of component
$i$ in $G / T$ by $x_{ij}$.   Define $CMF(G) := F(x_{ij}) / X$, where $F(x_{ij})$ is the free
group with generators $x_{ij}$, and $ X := \ll[x_{ij}^{g_{1}},x_{ik}^{g_{2}}]\gg $, with
$g_{1}, g_{2}$ arbitrary words in the $x_{ij}$.  We will often refer to the relations induced by
$X$ as the \emph{Milnor relations}. When $CMF(G)$ is generated by $x_{ij}$ with $1 \leq i \leq
n$ and $1\leq j \leq r_{i}$ then $CMF(\Phi)$ is generated by meridians $m_{ij}$ with $1 \leq i \leq
n$ and $1\leq j \leq r_{i}$, for the same values of $n$ and $r_{i}$.

For an edge $x_{ij}$ in $G$, there is a unique minimal path $p_{ij}$
in $T$ connecting the end points of $x_{ij}$.  Let $l_{ij} :=
f(x_{ij} \cup p_{ij}) \in CM(\Phi)$.

Note that the free colored Milnor group $CMF(G)$ is a colored
Milnor group in the sense of Freedman and Teichner, and hence is nilpotent by Lemma 3.1 of
\cite{ft}.  As $CM(\Phi)$ is a quotient of $CMF(G)$, it is nilpotent as well.  This is important, as it
allows us to make use of the following theorem of Stallings \cite{stallings}.

\begin{thm}[Stallings] Let $G_{1}$ and $G_{2}$ be nilpotent groups, and
$\psi : G_{1} \rightarrow G_{2}$ a homomorphism.  If the induced map $H_{1}(G_{1})
\rightarrow H_{1}(G_{2})$ is an isomorphism, and the induced map $H_{2}(G_{1})
\rightarrow H_{2}(G_{2})$ is surjective, then $\psi$ is an isomorphism.
\label{stallingsthm}
\end{thm}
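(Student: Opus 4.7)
The plan is to reduce the statement to showing that $\psi$ induces an isomorphism $\bar\psi_n : G_1/\gamma_n G_1 \to G_2/\gamma_n G_2$ for every $n \geq 1$, where $\gamma_n$ denotes the $n$-th term of the lower central series. Since $G_1$ and $G_2$ are nilpotent, there is some $N$ with $\gamma_N G_i = \{1\}$ for both $i=1,2$, so isomorphism at level $N$ is exactly what we want. The base case $n=2$ is immediate: $G/\gamma_2 G$ is the abelianization, which is $H_1(G)$, and $H_1(\psi)$ is an isomorphism by hypothesis.

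For the inductive step, assume $\bar\psi_n$ is an isomorphism and apply the Stallings--Hopf five-term exact sequence in group homology associated to the central extension $1 \to \gamma_n G/\gamma_{n+1} G \to G/\gamma_{n+1} G \to G/\gamma_n G \to 1$:
$$H_2(G) \to H_2(G/\gamma_n G) \to \gamma_n G/\gamma_{n+1} G \to H_1(G) \to H_1(G/\gamma_n G) \to 0.$$
This sequence is natural, so writing it for $G_1$ and $G_2$ and connecting via $\psi$ gives a commutative ladder. The inductive hypothesis that $\bar\psi_n$ is an isomorphism makes the induced maps on $H_1(G/\gamma_n G)$ and $H_2(G/\gamma_n G)$ isomorphisms, while the hypotheses on $\psi$ make the map on $H_1(G)$ an isomorphism and the map on $H_2(G)$ a surjection.

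A four-lemma chase on this ladder then forces the induced map $\gamma_n G_1/\gamma_{n+1} G_1 \to \gamma_n G_2/\gamma_{n+1} G_2$ to be an isomorphism. Combining this layer isomorphism with the isomorphism $\bar\psi_n$, the short five lemma applied to the central extension above yields that $\bar\psi_{n+1}$ is an isomorphism, completing the induction.

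The main obstacle is the diagram chase on the layer $\gamma_n G/\gamma_{n+1} G$: because we only assume surjectivity (not an isomorphism) on $H_2$, the standard five lemma is unavailable, and one has to appeal carefully to a four-lemma-style argument using the fact that the right-hand portion of the ladder already consists of isomorphisms. Granting this, the rest of the argument is a straightforward induction driven by the naturality of the Stallings--Hopf sequence.
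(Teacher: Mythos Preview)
Your argument is the standard and correct proof of Stallings' theorem: induction along the lower central series using the five-term exact sequence, together with a four-lemma chase to handle the graded pieces $\gamma_n G/\gamma_{n+1}G$. One small remark worth making explicit is that for $n\geq 2$ the map $\gamma_n G/\gamma_{n+1}G \to H_1(G)$ is zero (since $\gamma_n G\subseteq [G,G]$), so the five-term sequence truncates to $H_2(G)\to H_2(G/\gamma_n G)\to \gamma_n G/\gamma_{n+1}G\to 0$; the diagram chase you allude to then becomes a clean cokernel comparison, and the surjectivity hypothesis on $H_2$ is exactly enough to get injectivity on the layer.

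As for comparison with the paper: there is nothing to compare. The paper does not prove this theorem; it is quoted as a background result of Stallings with a citation to his 1965 paper \emph{Homology and central series of groups}, and is then applied as a black box to show that $CM(\Phi)$ is a component-homotopy invariant. Your proposal supplies a proof where the paper gives only a reference, and the proof you give is essentially the one in Stallings' original article.
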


With Stallings' theorem at our disposal, it is possible to show that
the group $CM(\Phi)$ is invariant under component homotopy on the
spatial graph $\Phi$ in the same way as for the colored Milnor group
in \cite{ft}. Roughly, the idea is this: given $\Phi_{0}$ a spatial
embedding of $G$, and $h$ be a component homotopy from $\Phi_{0}$ to
$\Phi_{1}$, let $H$ be the track of this homotopy in $S^{3} \times
I$, and $CM(W) := \pi_{1}(S^{3} \times I \setminus H) / X$. Consider
the maps $\psi_{i} : CM(\Phi_{i}) \rightarrow CM(W)$ induced by
inclusion.  The inclusions carry meridians of $\Phi_{i}$ to
meridians of $H$, so the $\psi_{i}$ clearly induce isomorphisms
$H_{1}(CM(\Phi_{i})) \rightarrow H_{1}(CM(W))$. By using finger
moves to introduce self intersections on $H$, we can arrange $CM(W)
\cong \pi_{1}(S^{3} \times I \setminus H)$.  Then by Alexander
duality, $H_{2}(CM(W))$ is generated by the handelbodies dual to
$\Phi_{i}$, and the linking tori at the self intersection points.
These linking tori simply realize the relations in $X$, so
$H_{2}(CM(\Phi_{i})) \rightarrow H_{2}(CM(W))$ is surjective.  Thus,
by Stallings' Theorem, $\psi_{0}$ and $\psi_{1}$ are isomorphisms,
and hence $CM(\Phi_{0}) \cong CM(\Phi_{1})$.

It will be necessary later to understand the structure of
the groups $CM(\Phi)$ in more detail. Given a spatial embedding $\Phi$, a regular neighborhood of
$\Phi_{i}$ is a handlebody, and the 2-cell of this handlebody induces the relation $\prod_{j}
[m_{ij},l_{ij}] =1$ in $\pi_{1}(S^{3} \setminus \Phi)$ and hence in $CM(\Phi)$.  We will call this
relation the \emph{surface relation} induced by $\Phi_{i}$, and the element $\Pi [m_{ij},l_{ij}]$
the \emph{surface element}.

Just as is shown for the classical Milnor group in \cite{m1}, it is
possible to show that $CM(\Phi)$ has the presentation below. The
fact that the only relations in $CM(\Phi)$ are the surface relations
and the Milnor relations will be important in later arguments.

\begin{lemma}The group $CM(\Phi)$ has
 the presentation $CM(\Phi) \cong \{ m_{ij} |r_{1}, \ldots r_{n}, \\
X=1 \}$, where the $m_{ij}$ are meridians to the edges $x_{ij}$,
$r_{i}$ is the surface relation given by $\Phi_{i}$, and $X=1$
denotes the relations arising from $X$.
\label{presentation}
\end{lemma}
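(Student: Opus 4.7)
The plan is to follow Milnor's original argument for the presentation of the classical link Milnor group in \cite{m1}, adapted to the spatial graph setting. I would begin with a Wirtinger-type presentation of $\pi_{1}(S^{3} \setminus \Phi)$ read off from a regular diagram of $\Phi$: generators are meridians of the arcs of the diagram (segments between consecutive undercrossings or vertices), relations come from crossings (the usual Wirtinger conjugation relations), and additional relations come from each vertex of $\Phi$ (the signed product of meridians of incident arcs equals the identity). Contracting the spanning tree $T$ identifies meridians that become parallel, so the generating set collapses to one meridian $m_{ij}$ per edge of $G / T$, matching the stated generators.

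Next I would verify that the surface relations $r_{i}$ hold in $\pi_{1}(S^{3} \setminus \Phi)$. A regular neighborhood $H_{i}$ of $\Phi_{i}$ is a handlebody of genus $r_{i}$ equal to the first Betti number of $\Phi_{i}$; on its boundary surface $\partial H_{i} \cong \Sigma_{r_{i}}$, the meridian curves represent the $m_{ij}$ and the longitude curves represent the $l_{ij}$. The single relation $\prod_{j} [a_{j}, b_{j}] = 1$ in $\pi_{1}(\Sigma_{r_{i}})$ then pushes down to exactly the surface relation $\prod_{j}[m_{ij}, l_{ij}] = 1$ in $CM(\Phi)$.

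The main step is to show that, modulo the Milnor relations $X = 1$, every Wirtinger and vertex relation reduces to a consequence of $r_{1}, \ldots, r_{n}$. For crossings between arcs of distinct components, the Wirtinger relation has the form $m = g m g^{-1}$ with $m$ a meridian of component $i$ and $g$ a conjugator built from meridians of another component; applying the Milnor commutator relations on component $i$ forces such relations to become trivial in the quotient. For self-crossings within a single component $\Phi_{i}$, the standard telescoping argument applies: traversing each loop of $\Phi_{i}$ and accumulating the conjugators at successive self-crossings builds up the longitude $l_{ij}$, and the product of all resulting crossing relations around $\Phi_{i}$ collapses modulo $X$ to the single surface element. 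Vertex relations, after spanning tree contraction, either become identities or are consequences of the crossing relations already accounted for.

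The principal obstacle is the bookkeeping in this telescoping step. In Milnor's setting each component has a single loop and one commutator $[m, l]$; for a spatial graph component with $r_{i} > 1$ loops, one must show that the product of all self-crossing Wirtinger relations for $\Phi_{i}$ produces the single combined relation $\prod_{j}[m_{ij}, l_{ij}] = 1$ rather than $r_{i}$ independent commutator relations. Leveraging the fact that, within a single component, the Milnor relations make each commutator $[m_{ij}, l_{ij}]$ commute with the others, one can reorganize the telescoped product into the required single surface relation, completing the proof.
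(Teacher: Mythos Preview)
Your overall plan---Wirtinger presentation, contract the spanning tree, then simplify modulo the Milnor relations---matches the paper's. But there is a genuine error in how you propose to dispose of the crossing relations.

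You write that a crossing between arcs of distinct components gives a relation ``$m = g m g^{-1}$'' which the Milnor relations then trivialize. This is wrong on two counts. First, the Wirtinger relation at an undercrossing has the form $m_{ij}^{k+1} = w\, m_{ij}^{k}\, w^{-1}$, relating \emph{two different} arc meridians along the same edge; it is not a commutator $[g,m]=1$ until those arc generators have already been identified. Second, even if it were $[g,m]=1$ with $g$ a meridian of a different component, that is \emph{not} a Milnor relation and is not implied by $X$: the Milnor relations $[m_{ij}^{g_1}, m_{ik}^{g_2}]=1$ involve only meridians of the \emph{same} component. Inter-component crossings are never killed by $X$; on the contrary, they are precisely what builds the nontrivial longitudes $l_{ij}$.

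The paper's mechanism is different, and the distinction matters. The crossing relations are not discarded but are used to \emph{eliminate generators}: each $m_{ij}^{k}$ is a conjugate of $m_{ij}^{1}$ by the word in overstrand meridians accumulated along the edge. Milnor's double induction (exactly as in \cite{m1}) uses the Milnor relations to show that once every $m_{ij}^{k}$ is replaced by the appropriate conjugate of $m_{ij}:=m_{ij}^{1}$, the crossing relations become redundant and only the vertex relations survive. After contracting the spanning tree each component has a single vertex, and its relation $\prod_j m_{ij}^{1}(m_{ij}^{r_{ij}})^{-1}=1$ becomes $\prod_j m_{ij}(m_{ij}^{w_{ij}})^{-1}=\prod_j[m_{ij},w_{ij}]=1$, which is the surface relation $r_i$. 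So the surface relation arises from the \emph{vertex} relation after generator elimination, not from a telescoped product of self-crossing relations, and the inter-component crossings contribute to the words $w_{ij}=l_{ij}$ rather than disappearing.
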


\begin{proof}

We begin with a Wirtinger presentation for $\pi_{1}(S^{3} \setminus
\Phi)$, which is calculated from a projection of the spatial graph.
Before projecting, we may first contract a spanning tree of $\Phi$,
as this does not affect $\pi_{1}(S^{3} \setminus \Phi)$.  Given a
projection, choose an arbitrary orientation for each edge, and label
the generators corresponding to each arc of that edge in the diagram
by $m_{ij}^{k}$, $1 \leq k \leq r_{ij}$ in the order the arcs are
met as the edge is traversed. At each vertex we have the relation
that the product of the meridians to all the arcs meeting that
vertex is trivial.  In a fixed component, each edge begins and ends
at the vertex, so we may choose a diagram such that the relation
induced by that vertex is $\prod_{j}
m_{ij}^{1}(m_{ij}^{r_{ij}})^{-1} = 1$.

The relations in the Wirtinger presentation for this diagram are
those coming from the vertices and those of the form $m_{ij}^{k+1} =
w_{ij}^{k}m_{ij}^{k}(w_{ij}^{k})^{-1}$ induced by the crossings,
where the $w_{ij}^{k}$ are arbitrary words in the $m_{ij}^{l}$.
Using the Milnor relations, the relations from the crossings, and a
double induction as in \cite{m1}, we can eliminate the relations
from the crossings and reduce the set of generators to only the
$m_{ij} := m_{ij}^{1}$.  The only remaining relations are those
induced by the vertices, which are of the form $\prod_{j}
m_{ij}(m_{ij}^{w_{ij}})^{-1} = \prod_{j}[m_{ij},w_{ij}] = 1$. These
are precisely the surface relations discussed above.

\end{proof}

Recall that a spatial embedding $\Phi$ of $G$ is completely split if each
component of $\Phi$ can be separated from all others by an embedded
$S^{2}$. We may now formulate an analogue of Theorem 8 of \cite{m1}.

\begin{thm}Let $\Phi$ be an embedding of a graph $G$ into $S^{3}$.
Then $CMF(G) \cong CM(\Phi)$ where the isomorphism preserves
meridinal generators (up to conjugacy) if and only if $\Phi$ is
component homotopic to a completely split embedding. \label{mainthm}
\end{thm}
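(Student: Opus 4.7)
The plan is to handle the two directions separately, using Lemma \ref{presentation} as the main tool.

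For the implication that complete splittability yields the isomorphism, I first invoke the component-homotopy invariance of $CM(\Phi)$ sketched above (which rests on Stallings' Theorem \ref{stallingsthm}) to reduce to the case where $\Phi$ itself is completely split. Then each $\Phi_i$ lies in a $3$-ball $B_i$ disjoint from the other components, so every longitude $l_{ij}$ may be isotoped into $B_i$ and is therefore represented by a word in meridians of $\Phi_i$ alone. Repeated application of the commutator identity $[a,bc] = [a,b]\cdot b[a,c]b^{-1}$ then expands each surface element $\prod_j [m_{ij}, l_{ij}]$ into a product of commutators of the form $[m_{ij}, m_{ik}^{w}]$, every one of which is a Milnor relation. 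Thus the surface relations in the presentation of Lemma \ref{presentation} are consequences of the Milnor relations, and the natural map $x_{ij} \mapsto m_{ij}$ is an isomorphism.

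For the converse, suppose $\theta : CMF(G) \to CM(\Phi)$ is an isomorphism sending each $x_{ij}$ to a conjugate of $m_{ij}$. Since each surface element $r_i = \prod_j [m_{ij}, l_{ij}]$ is trivial in $CM(\Phi)$, but the presentation of $CMF(G)$ contains only Milnor relations, the lift $\tilde r_i = \prod_j [x_{ij}, \theta^{-1}(l_{ij})]$ must already be trivial in $CMF(G)$. I would then split components off one at a time: the triviality of $\tilde r_i$ provides an expression of the longitudinal data of $\Phi_i$ modulo same-component commutators, and I would realize this expression geometrically as a singular surface bounded by $\Phi_i$ in $S^3 \setminus (\Phi \setminus \Phi_i)$ whose singular points are self-intersections of $\Phi_i$. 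A finite sequence of finger moves --- each a self-crossing change, hence a component homotopy --- then removes these intersections and produces an embedded surface. The boundary of its regular neighborhood is the desired separating $S^2$; iterating over all components yields a completely split embedding component-homotopic to $\Phi$.

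The main obstacle will be the geometric realization step. For classical links this is essentially the content of Milnor's argument in \cite{m1}, but the passage from single-circle components to graph components introduces new difficulties: each $\Phi_i$ contains multiple loops whose longitudes must be capped by a coherent surface rather than by independent disks. Coordinating these cappings using the single algebraic expression of $\tilde r_i$ as a product of Milnor commutators --- and checking that every resulting crossing change remains inside one component --- is the delicate work. The spanning-tree contraction built into the definition of $CMF(G)$ should ensure that the bookkeeping goes through, but verifying this carefully is where the real content of the argument lies.
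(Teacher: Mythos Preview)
Your forward direction is essentially the paper's argument and is fine.

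Your backward direction, however, has a genuine gap at exactly the point you flag as ``the delicate work,'' and the paper's route is structurally different in a way that matters. You propose to take the entire lifted surface element $\tilde r_i = \prod_j [x_{ij},\theta^{-1}(l_{ij})]$, use its triviality in $CMF(G)$ to extract information about \emph{all} the longitudes $l_{ij}$ at once, and then cap $\Phi_i$ by a single singular surface. Two problems: first, a graph component $\Phi_i$ with $b_1(\Phi_i)=r>1$ does not ``bound a surface'' in any sense that yields a separating $2$--sphere; what you need are compressing disks for the genus--$r$ boundary of a regular neighborhood, which is a different and harder geometric problem. Second, and more seriously, the algebraic step you need --- that triviality of the \emph{product} $\prod_j [x_{ij},\tilde l_{ij}]$ in $CMF(G)$ forces each $\tilde l_{ij}$ individually to be a product of conjugates of generators of colour $i$ --- is not obvious and you give no mechanism for it. Cancellation between the factors could in principle occur.

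The paper avoids both issues by inducting not on components but on the number of edges in a single component $\Phi_n$. Deleting one edge $e$ with meridian $m_{nr}$, a $5$--lemma argument shows the restricted map $\theta'\colon CMF(G\setminus e)\to CM(\Phi\setminus e)$ is again a generator--preserving isomorphism, so by induction $\Phi\setminus e$ is made completely split (carrying $e$ along). At this stage the surface relation for $\Phi_n$ collapses to the \emph{single} commutator $[x_{nr},l_{nr}]=1$ in $CMF(G)$, and now the centralizer result from \cite{ft} --- that in a colored Milnor group the only elements commuting with a generator are products of conjugates of same--colored generators --- applies directly to give $l_{nr}=\prod (x_{nj})^{g_j}$. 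That conclusion is then realized geometrically by explicit local moves on the one remaining arc $e$, not by any global surface construction. The induction on edges is precisely what converts your unmanageable product of commutators into a single commutator to which the key algebraic lemma applies; without that reduction your outline does not go through.
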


\begin{proof}

Suppose that $\Phi$ is component homotopic to a completely split embedding.  Since $CM(\Phi)$ is
invariant under component homotopy, we may assume that $\Phi$ is completely split.

The group $CM(\Phi)$ is a free group modulo the set $X$ and the
surface relations generated by the handlebodies forming the
boundaries of neighborhoods of each component of $\Phi$.  The
surface relations in $CM(\Phi)$ are of the form
$\prod[m_{ij},l_{ij}] = 1$ for fixed $i$.  Since each component is
separated from all others, for a fixed $i$, each $l_{ij}$ is a word
in the $m_{ij}$. Since $m_{ij}$ commutes with all other generators
with index $i$, $[m_{ij},l_{ij}] = 1$ for each pair $m_{ij},l_{ij}$.
Thus, in $CM(\Phi)$ the surface relations are trivial, so we have
only the relations introduced by $X$, and the map $x_{ij}
\rightarrow m_{ij}$ is an isomorphism $CMF(G) \rightarrow CM(\Phi)$.

%\medskip

Suppose that an isomorphism $\theta: CMF(G) \rightarrow CM(\Phi)$ exists.  We will induct on the
number of edges in the final component.  Clearly if $\Phi$ has one component, it is component
homotopic to a split embedding, so the base case is trivial.

Suppose $\Phi = \coprod \Phi_{i}$ is a spatial graph of $n$ components, and the first Betti number
$b_{1}(\Phi_{n}) =r$.  Then there are $r$ generators of $CM(\Phi)$ arising from $\Phi_{n}$,
specifically $m_{nj}$, $1 \leq j \leq r$.

If deleting an edge of $\Phi_{n}$ does not change $b_{1}(\Phi_{n})$,
that edge is contractible and so the argument is trivial.  So
suppose that deleting an edge $e$ does change $b_{1}(\Phi_{n})$. We
may choose the spanning tree $T$ to avoid $e$, so let $m_{nr}$ be
the generator of $CM(\Phi)$ represented by a meridian of $e$.

Let $G':= G \setminus e$ and $\Phi'$ be the restriction of $f$ to
$G'$. There is an obvious surjection $CMF(G) \rightarrow CMF(G')$
given by $x_{nr}=1$, and the kernel of this map is clearly $\ll
x_{nr} \gg$.  Let $\theta_{e}$ be the restriction of $\theta$ to
this kernel and $\theta': CMF(G') \rightarrow CM(\Phi)$ be the map
induced by $\theta$.

We want to show that $\theta'$ is an isomorphism.  By the 5 Lemma, this is equivalent to showing
that $\theta_{e}$ is an isomorphism.

$$
\xymatrix{\ll x_{nr} \gg \ar[d]_{\theta_{e}} \ar@{^{(}->}[r] & CMF(G)
\ar[d]_{\theta}^{\cong} \ar@{->>}[r]^{x_{nr}=1} & CMF(G') \ar[d]_{\theta'}
\ar[r] & 1 \ar[d]\\
\ll m_{nr} \gg \ar@{^{(}->}[r] & CMF(\Phi) \ar@{->>}[r]^{m_{nr}=1} &
CM(\Phi') \ar[r] & 1}
$$

Since $\theta$ sends generators to conjugates of generators,
$\theta(x_{nr}) = m_{nr}^{g}$ and so $\theta_{e}(\ll x_{nr} \gg) \subset
\ll m_{nr} \gg$. Clearly, as $\theta_{e}$ is the restriction of an
isomorphism it is injective, so it suffices to show that it is surjective.
Given $m_{nr}^{g_{1}} \in \ll m_{nr} \gg$, we have
$\theta^{-1}(m_{nr}^{g_{1}}) = x_{nr}^{\theta^{-1}(g^{-1}g_{1})} \in \ll
x_{nr} \gg$. Thus $\theta_{e}$ is an isomorphism, and so is $\theta'$.

As $\Phi'$ has fewer edges, and $\theta'$ is an isomorphism carrying
generators to conjugates of generators, we may apply the induction
hypothesis and use component homotopy to convert $\Phi'$ to a
completely split embedding. If we perform this component homotopy on
$\Phi$ simply carrying the extra edge $e$ along, we arrive at the
embedding shown in Figure \ref{setup}.

\begin{figure}[hbtp]

\centering

\begin{picture}(360,144)

\includegraphics{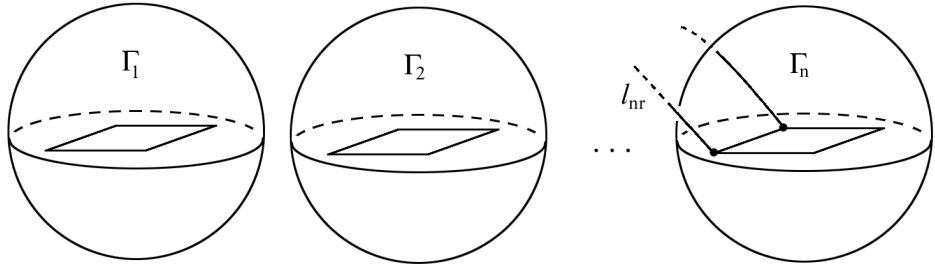}

\end{picture}

\caption{The embedding of $\Phi$ after the inductive hypothesis. The edge labeled $e$ winds
through some or all of the components.}

\label{setup}

\end{figure}

Since the embedding $\Phi'$ is completely split, in $CM(\Phi')$ the
loop $l_{ni}$ is a word in the $m_{nj}$.  Thus, in $CM(\Phi)$,
$l_{ni}$ is a word in $m_{nj}$, $j<r$ and $m_{nr}^{g}$ for any $g
\in \pi_{1}(S^{3} \setminus \Phi)$.
%(\textbf{check this!})  totally true.
Now, since $[m_{ni}, m_{nj}^{g}] =1$ in $CM(\Phi)$, we know that $[m_{ni},l_{ni}]=1 \in
CM(\Phi)$ for all $i < r$.

We have the relation $\prod_{i\leq r} [m_{ni},l_{ni}] =1$ in $CM(\Phi)$ from the 2-cell of the
neighborhood of $\Phi_{n}$,
but as we just saw that $[m_{ni},l_{ni}]=1$, $i < r$, we may conclude that $[m_{nr},l_{nr}] =1$
in $CM(\Phi)$.

Using the isomorphism $\theta$, we have that $[x_{nr},l_{nr}] =1$ in
$CMF(G)$. In $CMF(G)$, the only elements that commute with a
generator $x_{ij}$ are conjugates of generators of the same color
\cite{ft}.  Thus, we know $l_{nr} = \Pi(x_{nj})^{g_{j}}$ for some
$g_{j} \in CMF(G)$, and hence has the same form in $CM(\Phi)$.

This means that $l_{nr} = \prod m_{nj_{1}}^{g_{nj_{1}}}
[m_{ij_{2}}^{g_{ij_{2}}},m_{ij_{3}}^{g_{ij_{3}}}]$ in $\pi_{1}(S^{3} \setminus \Phi')$.  We
may eliminate the commutators through component homotopy as shown in Figure \ref{linkcancel}.
To eliminate the terms of $m_{nj}^{g_{nj}}$, we use the component homotopy move as in Figure
\ref{graphcancel}.

We may reduce to $l_{nr} =1$ in $\pi_{1}(S^{3} \setminus \Phi')$.
Thus the loop $l_{nr}$ is contractible and may be isotoped into the
sphere containing $\Phi_{n}$, so $\Phi$ is component homotopic to a
completely split embedding.

\end{proof}

\begin{figure}[hbtp]

\centering

\begin{picture}(360,144)

\includegraphics{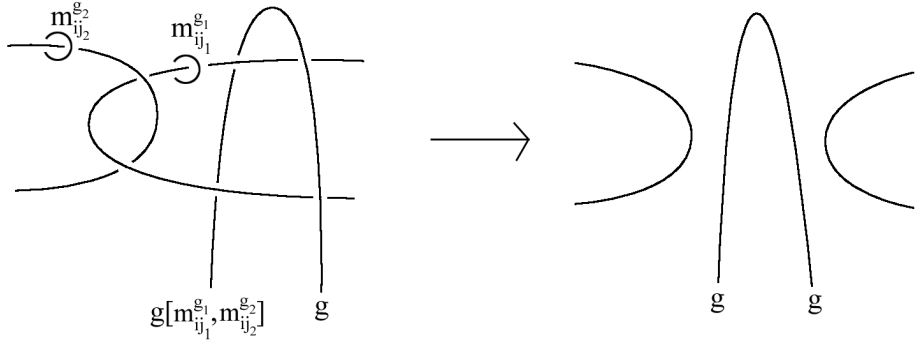}

\end{picture}

\caption{Using component homotopy to cancel elements of the form
$[m_{ij}^{g_{1}},m_{ik}^{g_{2}}]$.}

\label{linkcancel}

\end{figure}

\begin{figure}[hbtp]

\centering

\begin{picture}(360,144)

\includegraphics{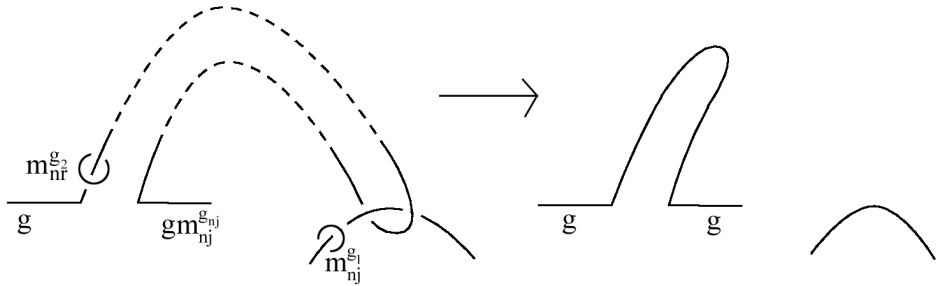}

\end{picture}

\caption{Using component homotopy to cancel elements of the form $m_{nj}^{g_{nj}}$.}

\label{graphcancel}

\end{figure}

Let $CM(\Phi^{i})$ denote the colored Milnor group of $\Phi^{i} := \Phi \setminus \Phi_{i}$, the
spatial graph obtained from $\Phi$ by removing the $i$th component
$\Phi_{i}$.  That is $CM(\Phi^{i}):=CM(\Phi)/(m_{ij}=1)$ for a fixed
$i$.  Given a map $\theta: CMF(G) \rightarrow CM(\Phi)$, we have the sequence

$$
\xymatrix@1{ ker~\theta \ar[r] & CMF(G) \ar[r]^{\theta} & CM(\Phi)}
$$

Let $G^{i} := G \setminus G_{i}$, and let $\theta^{i}$ denote the restriction of the map $\theta$ to
$G^{i}$.  This induces the sequence

$$
\xymatrix@1{ ker~\theta^{i} \ar[r] & CMF(G^{i}) \ar[r]^{\theta^{i}} & CM(\Phi^{i})}
$$

We are now able to produce a theorem analogous to Milnor's Corollary 2 of  \cite{m1}.

\begin{thm}Let $\Phi$ be an embedding of a graph $G$ into $S^{3}$.
Suppose the map $\theta: CMF(G) \rightarrow CM \Phi$ sends generators to
conjugates of generators. Then $ker~\theta^{i} \cong ker~\theta$ if and only if
$\Phi$ is component homotopic to an embedding where $\Phi_{i}$ can be separated from the rest of
$\Phi$ by an embedded $S^{2}$.
\label{itriv}
\end{thm}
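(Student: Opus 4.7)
The plan is to parallel the proof of Theorem \ref{mainthm}: the direction \emph{splitting implies kernel condition} is an algebraic consequence of the compatibility of $CM$ with the free-product decomposition coming from a separating sphere, while the converse runs by induction on $b_{1}(G_{i})$ and ends with the same component-homotopy cancellation moves that finish Theorem \ref{mainthm}.

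For the easy direction, suppose $\Phi_{i}$ is separated from $\Phi^{i}$ by an embedded $S^{2}$. Since $CM$ is invariant under component homotopy, we may assume the sphere is already present, so van Kampen gives $\pi_{1}(S^{3}\setminus\Phi)\cong\pi_{1}(S^{3}\setminus\Phi^{i})*\pi_{1}(S^{3}\setminus\Phi_{i})$. The Milnor relations are intra-color, so they respect the free product and $CM(\Phi)\cong CM(\Phi^{i})*CM(\Phi_{i})$. Similarly $CMF(G)\cong CMF(G^{i})*CMF(G_{i})$, and $\theta$ is presented as $\theta^{i}*\theta_{i}$. Since $\Phi_{i}$ is a single connected component it is trivially completely split, so Theorem \ref{mainthm} makes $\theta_{i}$ an isomorphism; all kernel contributions come from the $G^{i}$ factor, giving $\ker\theta\cong\ker\theta^{i}$.

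For the hard direction, assume $\ker\theta^{i}\cong\ker\theta$. I would induct on $r_{i}:=b_{1}(G_{i})$. When $r_{i}=0$, $G_{i}$ is a tree and $\Phi_{i}$ is ambient-isotopic into a small ball disjoint from $\Phi^{i}$. Otherwise choose an edge $e$ of $G_{i}$ outside the spanning tree with $b_{1}(G_{i}\setminus e)<b_{1}(G_{i})$, corresponding to a generator $x_{ir}$, and set $\Phi':=\Phi\setminus e$. The 5-Lemma diagram used in the proof of Theorem \ref{mainthm} shows $\theta_{e}:\ll x_{ir}\gg\to\ll m_{ir}\gg$ is an isomorphism (using that $\theta$ sends generators to conjugates), so $\ker\theta\cong\ker\theta'$. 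Since neither $G^{i}$ nor $\Phi^{i}$ changes when $e$ is removed, $(\theta')^{i}=\theta^{i}$, whence $\ker\theta'\cong\ker(\theta')^{i}$, and the inductive hypothesis places $\Phi_{i}\setminus e$ in a ball $B$ disjoint from $\Phi^{i}$ after a component homotopy; perform that homotopy on $\Phi$ dragging $e$ along.

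What remains is to retract $e$ back into $B$. The surface relation for $\Phi_{i}$ reads $\prod_{j}[m_{ij},l_{ij}]=1$, and for $j<r$ each $l_{ij}$ lies in $B$ and is a word in color-$i$ meridians, so $[m_{ij},l_{ij}]=1$ by the Milnor relations and $[m_{ir},l_{ir}]=1$ in $CM(\Phi)$. The main obstacle is upgrading this commutation to the explicit form $l_{ir}=\prod m_{ij}^{g_{j}}$ modulo commutators of the type $[m_{kj_{2}}^{g_{2}},m_{kj_{3}}^{g_{3}}]$ in $\pi_{1}(S^{3}\setminus\Phi')$, at which point the cancellation moves of Figures \ref{linkcancel} and \ref{graphcancel} kill $l_{ir}$ and free $e$ to be isotoped into $B$. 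To extract this form I would lift the commutation through $\theta$ and exploit the kernel hypothesis: since $\ker\theta\cong\ker\theta^{i}$ is supported away from color $i$, the color-$i$ portion of $\theta$ is effectively free, so the structural fact about $CMF(G)$ that only conjugates of color-$i$ generators commute with $x_{ir}$ transfers through $\theta$ up to precisely the commutator correction terms that the two figures are designed to remove.
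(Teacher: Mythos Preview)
Your outline follows the right template, but two steps do not go through as written.

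In the easy direction, the claim $CM(\Phi)\cong CM(\Phi^{i})*CM(\Phi_{i})$ (and likewise for $CMF$) is false: the Milnor relations $[m_{ij}^{g_{1}},m_{ik}^{g_{2}}]=1$ allow the conjugators $g_{1},g_{2}$ to range over the \emph{whole} group, not just the factor containing color~$i$, and these mixed relations genuinely collapse the free product (e.g.\ $CMF(G_{i})\cong\mathbb{Z}^{r_{i}}$, but in $CMF(G^{i})*\mathbb{Z}^{r_{i}}$ a conjugate $g\,x_{ij}\,g^{-1}$ with $g\in CMF(G^{i})$ does not commute with $x_{ik}$). The paper sidesteps this by arguing directly with generators of the kernel: $\ker\theta$ is normally generated by the lifts of the surface elements $\prod_{j}[m_{kj},l_{kj}]$; when $\Phi_{i}$ is split off, the $i$th surface element is a word in color-$i$ meridians only and is therefore already trivial in $CMF(G)$, while the remaining surface elements are written without any $m_{ij}$. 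Hence the quotient $x_{ij}\mapsto 1$ carries $\ker\theta$ isomorphically onto $\ker\theta^{i}$.

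In the hard direction, you invoke the $5$-Lemma argument of Theorem~\ref{mainthm} to get $\theta_{e}$ an isomorphism, but that argument used both the injectivity of $\theta$ and the existence of $\theta^{-1}$, neither of which is available here. What the hypothesis $\ker\theta\cong\ker\theta^{i}$ actually gives you is that the map $\ker\theta\to\ker\theta^{i}$ induced by $x_{ij}\mapsto 1$ is injective; since the $i$th surface element lies in $\ker\theta$ and visibly dies under this map, it must already be trivial in $CMF(G)$, i.e.\ $\prod_{j}[x_{ij},l_{ij}]=1$ there. This is the key step in the paper's proof and replaces your $5$-Lemma reduction entirely. Once you have it, the induction peels off $[x_{ir},l_{ir}]=1$ in $CMF(G)$ (not merely in $CM(\Phi)$), after which the Freedman--Teichner commutation fact and the moves of Figures~\ref{linkcancel} and~\ref{graphcancel} finish exactly as you describe. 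Your closing paragraph gestures at this (``supported away from color~$i$''), but the concrete mechanism---the $i$th surface element is in $\ker\theta$ and dies under the quotient to $CMF(G^{i})$, hence is already trivial in $CMF(G)$---is the missing idea.
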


\begin{proof}

This proof is similar to that of Theorem \ref{mainthm}. Recall that
the only relations in $CM(\Phi)$ are those induced by $X$ and the
surface relations.  Under $\theta$, the set $X$ in $CM(\Phi)$ is hit
only by elements of $X$ in $CMF(G)$, so $ker~\theta$ is generated by
the preimages of the surface relations.  Suppose that $\Phi$ is
component homotopic to an embedding where $\Phi_{1}$ can be
separated from the other components by an embedded $S^{2}$.
Splitting along this $S^{2}$, Van Kampen's theorem implies that
$\pi_{1}(S^{3} \setminus \Phi) = \pi_{1}(S^{3} \setminus \Phi') *
\pi_{1}(S^{3} \setminus \Phi_{1})$, where $\Phi' := \Phi \setminus
\Phi_{1}$.  As the surface element $\Pi [m_{1j},l_{1j}]$ lies in
$\pi_{1}(S^{3} \setminus \Phi_{1})$ and the surface elements $\Pi
[m_{ij},l_{ij}]$ lie in $\pi_{1}(S^{3} \setminus \Phi')$ for $i \geq
2$, the surface relation given by $\Phi_{1}$ is trivial in
$CM(\Phi)$, and the surface elements given by the $\Phi_{i}$, $i>1$
can be written without the generators $m_{1j}$.  Thus, $ker~\theta
\cong ker~\theta^{i}$ under the map taking $x_{1j}$ to 1 and
$x_{ij}$ to $x_{ij}$ for $i>1$.

\medskip%

Now suppose that the component of interest is $\Phi_{n}$ and
$ker~\theta \cong ker~\theta^{n}$. We now induct on the number of
edges in $\Phi_{n}$ as before. Since $ker~\theta \cong
ker~\theta^{n}$, we have that the surface relation from $\Phi_{n}$
is trivial, that is $\prod_{1}^{r} [x_{nj},l_{nj}] =1$ in $CMF(G)$.
However, by induction we know that $\prod_{1}^{r} [x_{nj},l_{nj}] =
[x_{nr},l_{nr}] = 1$ in $CMF(G)$.  We may now use the same
techniques as in the proof of Theorem \ref{mainthm} to alter $\Phi$
by component homotopy so that $\Phi_{n}$ can be separated from the
other components.

\end{proof}

\section{Numerical invariants and relations to links}

Recall that a \emph{constituent link} is a subgraph of $\Phi$ that is
homeomorphic to a disjoint union of circles.

\begin{thm} Let $L$ be a constituent link of $\Phi$, where at most one component of $L$ is
contained in each component of $\Phi$.  Every such constituent link of $\Phi$ is link homotopic to
the trivial link if and only if $\Phi$ is component homotopic to a completely split embedding.
\label{splitthm}
\end{thm}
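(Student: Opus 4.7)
The forward implication is straightforward: if $\Phi$ is component homotopic to a completely split embedding, then after the homotopy each $\Phi_i$ lies in its own ball $B_i$, and any admissible constituent link $L$ has its components lying in disjoint balls. Hence $L$ is split, and in particular link homotopic to the trivial link.

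For the converse I would induct on the number $n$ of components of $G$. The case $n=1$ is vacuous. For the inductive step, every admissible constituent link of $\Phi' := \Phi \setminus \Phi_n$ is automatically an admissible constituent link of $\Phi$, so by induction $\Phi'$ is component homotopic to a completely split embedding; carrying $\Phi_n$ along during this homotopy, I may assume $\Phi_1, \ldots, \Phi_{n-1}$ already lie in pairwise disjoint balls, and by Theorem~\ref{mainthm} the natural map $CMF(G') \to CM(\Phi')$ is an isomorphism. It then suffices to split $\Phi_n$ off from the rest. Letting $\theta \colon CMF(G) \to CM(\Phi)$ be the natural surjection (which sends generators to generators, and in particular to conjugates of generators), Theorem~\ref{itriv} reduces this task to showing $\ker \theta \cong \ker \theta^n$, which by the analysis in the proof of that theorem is equivalent to the surface relation $\prod_j [x_{nj}, l_{nj}] = 1$ already holding in $CMF(G)$.

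I would aim for the stronger assertion $[x_{nj}, l_{nj}] = 1$ in $CMF(G)$ for each $j$. By the centralizer fact from \cite{ft} cited in the proof of Theorem~\ref{mainthm}, this is equivalent to $l_{nj}$ lying in the normal closure $\ll x_{nk} \gg$ of the color-$n$ generators, and hence (via the natural isomorphism $CMF(G)/\ll x_{nk} \gg \,\cong\, CMF(G')$, combined with the isomorphism $CMF(G') \cong CM(\Phi')$ provided by induction) to the class $[l_{nj}] \in CM(\Phi')$ being trivial. This makes sense because the cycle $l_{nj}$ lies in $\Phi_n$ and is therefore disjoint from $\Phi'$.

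To verify that $[l_{nj}] = 1$ in $CM(\Phi')$ I would invoke the constituent link hypothesis. For any cycles $c_{i_1}, \ldots, c_{i_k}$ in distinct components of $\Phi'$, the link $L := l_{nj} \cup c_{i_1} \cup \cdots \cup c_{i_k}$ is an admissible constituent link of $\Phi$ and so link homotopic to the trivial link. Milnor's original Theorem~8 of \cite{m1} then yields $ML \cong MF$ preserving meridians, so the longitude of $l_{nj}$ in $ML$ is trivial. Under the quotient map $ML \to M(L \setminus l_{nj})$ obtained by filling in the tubular neighborhood of $l_{nj}$ (which kills only the meridian of $l_{nj}$), this longitude is sent to the class $[l_{nj}]$; hence $[l_{nj}]$ vanishes in $M(L')$ for every such constituent sublink $L'$ of $\Phi'$. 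The main obstacle is the final assembly step: using these vanishings to conclude $[l_{nj}] = 1$ in $CM(\Phi')$ itself. I expect this to follow by choosing a sufficiently rich generating collection of cycles in each $\Phi_i$ (for instance, loops of the form $l_{ik}$ realizing the generators of $CM(\Phi_i)$) so that the induced maps $CM(\Phi') \to M(L')$ are jointly injective on $CM(\Phi') \cong CMF(G')$. Making that joint injectivity precise, perhaps via a nilpotent or Stallings-type argument exploiting that $\Phi'$ is already completely split, will be the technical heart of the proof.
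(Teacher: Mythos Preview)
Your forward implication matches the paper. For the converse you take a genuinely different route, and the gap you yourself flag is real: having shown that $[l_{nj}]$ dies in $M(L')$ for every admissible sublink $L' \subset \Phi'$, you need that the projections $CMF(G') \to MF_{n-1}$ (sending $x_{ij} \mapsto \delta_{j,j_i}\, x_i$ for each choice of indices $j_i$) are jointly injective. This is in fact true---one can see it on the associated graded of the lower central series, where a nonzero coefficient on a basic commutator $[x_{i_1 j_1},[x_{i_2 j_2},\ldots]]$ with distinct colors is detected by the projection with exactly those second indices---but it is a structural lemma about colored Milnor groups that you would need to state and prove, and which is not supplied by the paper or directly by \cite{ft}. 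Your reductions up to that point (centralizer description from \cite{ft}, the identification $CMF(G)/\ll x_{nk}\gg \cong CMF(G') \cong CM(\Phi')$, and the use of Theorem~\ref{itriv}) are all sound.

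The paper avoids this lemma entirely by arguing the contrapositive and manufacturing an explicit witness link. Assuming $\Phi$ is not split, Theorem~\ref{mainthm} yields some $[x_{1j_1}, l_{1j_1}] \neq 1$ in $CMF(G)$; one then deletes the other edges of $G_1$, reducing $\Phi_1$ to the single cycle $l_{1j_1}$, and checks the commutator survives in the smaller $CMF$. From there, either some later component can be split off---contradicting the inductive hypothesis applied to the $(n-1)$-component remainder---or there is a bad cycle $l_{2j_2}$ in $\Phi_2$, and one reduces $\Phi_2$ likewise. Iterating through all components produces an $n$-component admissible constituent link $L$ with $[x_{nj_n}, l_{nj_n}] \neq 1$ in $MF_n$, hence $L$ is not link homotopically trivial. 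So the paper trades your clean single induction on $n$ plus an algebraic injectivity lemma for a more elaborate iterated reduction that requires no new algebra beyond Theorems~\ref{mainthm} and~\ref{itriv}.
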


\begin{proof}

Let $\Phi$ be a spatial graph component homotopic to a completely
split embedding $\Phi'$, and let $L$ be a constituent sublink of
$\Phi$, with at most one component of $L$ contained in each
component of $\Phi$. The component homotopy from $\Phi$ to $\Phi'$
induces a link homotopy carrying $L$ to $L'$. Each component of
$\Phi'$ can be separated from the others by an embedded $S^{2}$.
Each separating 2-sphere contains a single component of $L'$, which
by additional component homotopy moves (contained in that sphere)
can be unknotted.  Thus, $L$ is link homotopic to the unlink.

%\smallskip

Assume that all the relavent constituent links of $\Phi$ are  link
homotopically trivial.  We will work by induction on the number of
components in $\Phi$. For a one component graph, all spatial
embeddings trivially satisfy the condition to be completely split.

Suppose $G = \coprod_{1}^{n} G_{i}$ is an $n$ component graph and
$\Phi := f(G)$ is not split up to component homotopy.  Then by
Theorem \ref{mainthm}, no map from $CMF(G)$ to $CM(\Phi)$ preserving
generators (up to conjugacy) is an isomorphism.  So given the map
$\theta(x_{ij}) = m_{ij}$, there must exist a nontrivial element in
the kernel.  Relabeling the components of $\Phi$ if necessary, it
must be of the form $\prod[x_{1j},l_{1j}]$. So some
$[x_{1j_{1}},l_{1j_{1}}] \neq 1$ in $CMF(G)$.  Delete all edges in
$G_{1}$ except for the edges forming $l_{1j_{1}}$ to produce $G^{1}
:= S^{1} \coprod_{2}^{n} G_{i}$, and form the embedding $\Phi^{1}$
by restricting $f$ to $G^{1}$. We have reduced $\Phi_{1}$ to a
circle, so we may now think of $l_{1j_{1}}$ as the zero framed
pushoff.

Suppose $[x_{1j_{1}},l_{1j_{1}}] = 1$ in $CMF(G^{1})$, then $l_{1j_{1}} =
x_{1j_{1}}^{g}$ in $CMF(G^{1})$ by \cite{ft}.  The map $CMF(G) \rightarrow CMF(G^{1})$
is given by $x_{1j} = 1$ for $j \neq j_{1}$, so the kernel is
normally generated by those $x_{1j}$.  So, in $CMF(G)$, $l_{1j_{1}} = \Pi x_{1j}^{g_{j}}$,
and thus $[x_{1j_{1}},l_{1j_{1}}] = [x_{1j_{1}}, \Pi x_{1j}^{g_{j}}] = 1$ in $CMF(G)$. This
is a contradiction, so $[x_{1j_{1}},l_{1j_{1}}] \neq 1$ in $CMF(G^{1})$, and thus $\theta^{1}:
CMF(G^{1}) \rightarrow CM(\Phi^{1})$ given by the restriction of $\theta$ is not an isomorphism.
So by Theorem \ref{mainthm}, $\Phi^{1}$ is not completely split.

Suppose $[x_{2j},l_{2j}] = 1$ in $CMF(G^{1})$ for all $j$.  Then as discussed in the proof of
Theorem \ref{mainthm} $l_{2j} = \Pi x_{2j}^{g_{j}} \in CMF(G^{1})$, and so $l_{2j} = \prod
m_{2j_{1}}^{g_{2j_{1}}} [m_{ij_{2}}^{g_{ij_{2}}},m_{ij_{3}}^{g_{ij_{3}}}]$ in
$\pi_{1}(S^{3} \setminus \Phi^{1})$.  We may now do component homotopy as in Figure
\ref{linkcancel} and Figure \ref{graphcancel} to separate $\Phi_{2}$ from the rest of $\Phi^{1}$.

Let $G^{1\overline{2}} := G^{1} \setminus G_{2}$, and
$\Phi^{1\overline{2}} := \phi(G^{1\overline{2}})$.  Then, $\Phi^{1}$
is component homotopic to $\Phi^{1\overline{2}} \coprod \Phi_{2}$.
The component homotopy induces a link homotopy on the relevant
constituent links, so by assumption, these constituent links of
$\Phi^{1\overline{2}}$ are link homotopic to trivial links. The
spatial graph $\Phi^{1\overline{2}}$ has $n-1$ components, so by
induction, it is completely split up to component homotopy, and
hence so is $\Phi^{1}$. This is a contradiction.

%Renumbering
%components if necessary, some generator of the form $x_{2j_{2}}$ must
%appear in $l_{1j_{1}}$.  If $[x_{2j_{2}},l_{2j_{2}}] = 1$ in
%$CMF(G^{1})$, then up to component homotopy, no edges pass through
%the handle dual to $m_{2j_{2}}$.  That is, the curves $l_{1j_{1}}$ and
%$l_{2j_{2}}$ may be separated by an embedded $S^{2}$, so $m_{2j_{2}}$
%would not appear in $l_{1j_{1}}$, a contradiction.

Thus, there exists some $x_{2j_{2}}$ with $[x_{2j_{2}},l_{2j_{2}}]
\neq 1$ in $CMF(G^{1})$.  Now, as before, delete edges of $G_{2}$ to
reduce it to the circle defined by $l_{2j_{2}}$, forming $G^{12} :=
S^{1} \coprod S^{1} \coprod_{3}^{n}G_{i}$.  Again, restricting $f$,
we obtain an embedding $\Phi^{12}$, and $[m_{2j_{2}},l_{2j_{2}}]
\neq 1$ in $CM(\Phi^{12})$.

%If $l_{2j_{2}}$ contains only the elements $m_{ij_{i}}$ for $i \leq 2$,
%we
%may form $\Phi'$ as below.  If not, some element of the
%form $x_{3j_{3}}$ must appear in $l_{2j_{2}}$. Repeat the argument as
%above until we obtain a spatial graph $\Phi^{12\ldots r}$, where
%$[x_{rj_{r}},l_{rj_{r}}] \neq 1$ in $CMF(G^{12\ldots r})$, and
%$l_{rj_{r}}$ contains only $x_{ij_{i}}$ for $i \leq r$.

%Now as $m_{2j_{2}}$ appears in $l_{1j_{1}}$,
%$m_{1j_{1}}$ must appear in $l_{2j_{2}}$ and by the same argument as before,
%we see that $[m_{1j_{1}},l_{1j_{1}}] \neq 1$ in $CMF(G^{12})$.

%If $l_{1j_{1}}$ contains a generator of the form $m_{3j_{3}}$, we repeat
%the process above until we obtain a graph $\Phi^{12\ldots r}$, where
%$[m_{1j_{1}},l_{1j_{1}}] \neq 1$ in $CM(\Phi^{12\ldots r})$, and
%$l_{1j_{1}}$ contains only $m_{ij_{i}}$ for $i \leq r$.

Repeat the argument as above until each component of $G$ has been
reduced to a circle, forming $G' := G^{12 \ldots n}$. Form $\Phi'$
by restricting $f$ to $G'$. Notice that $[x_{nj_{n}},l_{nj_{n}}]
\neq 1$ in $CMF(G')$. Now, $\Phi'$ is an $n$ component link $L$
where each component of $L$ came from a distinct component of $G$.
Thus $CM(\Phi')$ is $ML$, and the map $\theta: CMF(G) \rightarrow
CM(\Phi)$ induces a map $\theta': MF \rightarrow ML$, where $MF$ is
the free Milnor group on $n$ generators.

The commutator $[x_{nj_{n}},l_{nj_{n}}] \neq 1$ in $MF$, but is an
element of the kernel of $\theta'$.  Since the map $\theta': MF
\rightarrow ML$ takes generators to conjugates of generators but is
not an isomorphism, by Theorem \ref{mainthm} $L$ is not link
homotopically trivial.

%
%
%Think this is enough because if ML is secretly MF, then any map taking gens
%to conjugates of gens would be an iso.
%
%

\end{proof}

To determine when a spatial graph $\Phi$ is completely split up to
component homotopy, Theorem \ref{splitthm} implies that it is
sufficient to check that the constituent links (of a certain type)
are all split instead of checking the conditions of Theorem
\ref{mainthm}. Note that a similar statement is not true for Theorem
\ref{itriv}. As the following example shows, it is possible for a
every constituent link of $\Phi$ to be $i$-trivial, but $\Phi$ to
not be component homotopic to $\Phi^{i} \coprod \Phi_{i}$.

\begin{example}
\label{E:akiraeg}
{\rm
Let $\Phi$ be the spatial graph shown in Figure \ref{akiraeg}.  Then
clearly every constituent link is $3$-trivial.  However, we have the
relation $[m_{31}, [m_{11},m_{21}]] = 1 \in CM(\Phi)$, but $[x_{31},
[x_{11},x_{21}]] \neq 1 \in CM(G)$.  Thus, for the map $\theta(x_{ij})
= m_{ij}$, we have $ker~\theta \ncong
ker~\theta^{3}$, and so by Theorem \ref{itriv} the embedding $\Phi$
is not
component homotopic to $\Phi^{3} \coprod \Phi_{3}$.
}
\end{example}

\begin{figure}[hbtp]

\centering

\begin{picture}(132,132)

\includegraphics{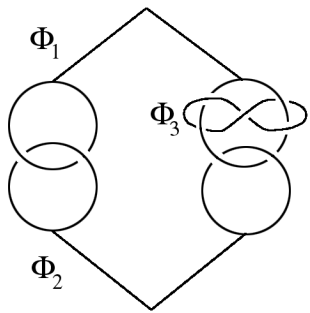}

\end{picture}

\caption{A spatial graph that is not split up to component homotopy.}

\label{akiraeg}

\end{figure}

Teichner and Freedman introduced the notion of colored link homotopy
in \cite{ft}.  In this notion, each component of link is assigned a
color, and crossing changes are allowed between components of the
same color.  Thereom \ref{splitthm} can be easily extended to the
statement that $\Phi$ is component homotopic to a completely split
embedding if and only if every constituent link of $\Phi$ is colored
link homotopic to the unlink, where two components of the link have
the same color if and only if they are contained in the same
component of $\Phi$.  This shows that in some sense, the group
$CM(\Phi)$ contains information about colored link homotopy classes
of all the constituent links.

It is possible to extract numerical invariants from the $CM(\Phi)$ which are the analogue of the
length of the shortest non-vanishing Milnor invariant for a link.

It is well known that the Milnor group of a link is nilpotent, and the group
$CM(\Phi)$ is also nilpotent.  The kernel of the map
$CMF(G) \rightarrow CM(\Phi)$ is generated by surface elements, which are products of
commutators
of elements.  This suggests that examining successive quotients by all
commutators of a fixed length may be productive.

Let $H$ be a subgraph of $G$, and $\Phi := f(G)$ a spatial
embedding. Let $\Phi^{H}$ be the spatial embedding of $G^{H} := G
\setminus H$ that is induced by $f$.  Let $CM(\Phi^{H})$ denote the
colored Milnor group of $\Phi^{H}$, Given a map $\theta: CMF(G)
\rightarrow CM(\Phi)$, we have the sequence below:

$$
\xymatrix@1{ ker~ \theta \ar[r] & CMF(G) \ar[r]^{\theta} & CM(\Phi)}
$$

Let $\theta^{H}$ be the map induced by the map $\theta$.

$$
\xymatrix@1{ ker~ \theta^{H} \ar[r] & CMF(G^{H}) \ar[r]^{\theta^{H}} &
CM(\Phi^{H})}
$$

Then we may define $\lambda_{\Phi}(H)$, a component homotopy
invariant of $\Phi$ as follows:

$$\lambda_{\Phi}(H) + 1 := \mbox{min}~ \{n~|~ ker~ \theta^{H}/[n] \ncong ker~ \theta/[n]\}$$

%
%Note, what I mean here is think of the kers as subsets of the CMf and
%CM\Gamma.  The projection CMF(\G)--> CM\G is given by xij goes to xij
%except for a fixed i, where xij goes to 1.  The map induced on the ker
%by this projection p is is  an iso.  That's the kind of iso I care about.
%probably should put this in.
%
%
%

Since the groups $CM(\Phi)$ and $CM(\Phi^{H})$ are invariant under
component homotopy, so is $\lambda_{\Phi}(H)$. Here we consider
$ker~ \theta$ as a subgroup of $CMF(G)$, and $[n]$ denotes all
commutators of length $n$ whose entries are elements of $CMF(G)$.
Notice that if $G = \coprod S^{1}$, then $\lambda_{L}(L_{i})$ is the
length of the index of the shortest nonvanishing Milnor homotopy
invariant whose index contains $i$.

\begin{example}{\rm The first nonvanishing Milnor invariant of the Borromean
rings is $\mu(123)$.  If $L$ is the Borromean rings, then the kernel of the map
$\theta: CMF(L) \rightarrow CML$ is generated by the element $[x_{1},
[x_{2},x_{3}]]$.  When the first component is removed, however, the resulting
link is the unlink, so $ker~ \theta^{L_{1}}$ is trivial.  Thus,
$\lambda_{L}(L_{1}) = 3$. }
\end{example}

\begin{figure}[hbtp]

\centering

\begin{picture}(288,144)

\includegraphics{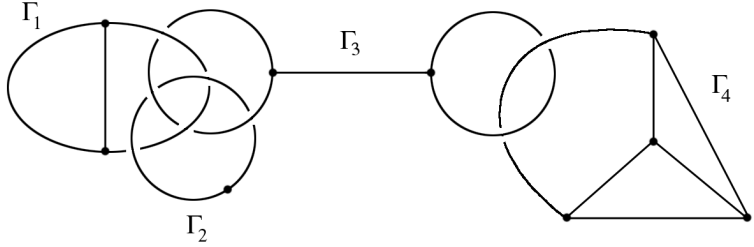}

\end{picture}

\caption{A spatial graph.}

\label{example1}

\end{figure}

\begin{example}
{\rm
Let $\Phi$ be the spatial embedding of $ \Theta_{3} \coprod  S^{1}\coprod H_{2} \coprod K_{4}$
shown in Figure \ref{example1}.  For this graph, $CM(\Phi)$ is isomorphic to $CMF(G)$ modulo
the following additional relations:

$$[m_{13}, [m_{21}, m_{31}]] = [m_{21}, [m_{31}, m_{13}]] = [m_{31}, [m_{13},
m_{21}]][m_{32},m_{41}] = [m_{41}, m_{32}] = 1$$

So, for this spatial graph, $\lambda_{\Phi}(G_{1}) = \lambda_{\Phi}(G_{2}) = 3$ and
$\lambda_{\Phi}(G_{3}) = \lambda_{\Phi}(G_{4}) = 2$.

Notice that while the surface relation given by $\Phi_{3}$ contains
multiple commutators, only the shortest matters for the computation of
$\lambda_{\Phi}(G_{3})$. Also, the presence of the length two commutator
involving $\Phi_{3}$ and $\Phi_{4}$ does not affect the computation of
$\lambda_{\Phi}(G_{1})$.
}
\end{example}

In fact, by the proof of Theorem \ref{splitthm}, when $H = G_{i}$ the value of
$\lambda_{\Phi}(H)$ will be the length of the shortest non-vanishing (colored) Milnor invariant of
any link that contains a cycle from $G_{i}$.

\medskip

\textsc{University of California San Diego, Department of
Mathematics, 9500 Gilman Dr., La Jolla,
 CA 92093-0112}

\emph{E-mail address:} \texttt{tfleming@math.ucsd.edu}

\end{document}